\documentclass{article}

\newcommand{\AMMAuthorName}{Rui Viana}
\newcommand{\AMMBibliographyFiles}{amm-references}
\newcommand{\AMMPostStyleSetup}{\pagestyle{plain}}

\usepackage{maa-monthly}

\final
\providecommand{\AMMPostStyleSetup}{}
\AMMPostStyleSetup
\theoremstyle{plain}
\newtheorem{theorem}{Theorem}
\newtheorem{proposition}[theorem]{Proposition}

\newtheorem*{maintheorem}{Theorem}

\theoremstyle{definition}

\newcommand{\E}{\mathbb E}
\newcommand{\Prb}{\mathbb P}

\providecommand{\AMMTitlePage}{}
\providecommand{\AMMAuthorName}{}
\providecommand{\AMMPreprintDisclosure}{}
\providecommand{\AMMNamedDeclarations}{}
\providecommand{\AMMAuthorBiography}{}
\providecommand{\AMMBibliographyFiles}{amm-references}

\newcommand{\AMMDisplayTitle}{A Common Structure Behind\\Birth Stopping Rules}

\begin{document}

\AMMTitlePage

\title{\AMMDisplayTitle}
\author{\AMMAuthorName}
\maketitle
\mark{{}{Birth Stopping Rules}}

\begin{abstract}
We study stopping rules in which each family is assigned a positive integer and stops the first time its number of girls equals that integer plus a fixed nonnegative integer multiple of its number of boys. After all families have stopped, we ask for the expected proportion of boys and the expected boy-to-girl ratio in the combined population, and show that both expectations depend on the assigned integers only through their sum. We obtain exact series for both expectations, allowing the probabilities of a boy and a girl to be unequal. Our formulas recover the known formulas for stopping at the first girl and for stopping when girls first outnumber boys. These expectations can also be recovered from earlier general results on Galton--Watson degree profiles and shifted inverse moments, but the resulting series do not appear to have been recorded in these forms. To our knowledge, our two elementary proofs, one bijective and one probabilistic, are also new.
\end{abstract}

\vspace{10px}
\noindent\textbf{Keywords:} stopping rules, first passage, random walks, lattice paths, ordered trees

\vspace{6px}
\noindent\textbf{2020 Mathematics Subject Classification:}\\
Primary 60G40; Secondary 60C05, 05A19.

\section{A puzzle and three related sums.}

Suppose each of \(n\) families continues having children until a girl is born, with boys and girls equally likely at each birth. After all families have stopped, what is the expected proportion of boys among all the children? A tempting first answer is \(1/2\). In a 2010 post on his blog \emph{The Big Questions} \cite{Landsburg}, Steven Landsburg posed this problem and, in follow-up posts, pointed out that \(1/2\) is the ratio of expectations, whereas the expected proportion of boys is given by the expectation of a ratio. Zare addressed the question further in \cite{Zare}.

Let \(B\) and \(G\) denote the total numbers of boys and girls born by the time all \(n\) families have stopped. The expected proportion of boys is
\begin{equation}
\label{eq:first-girl-proportion}
 \E\left(\frac{B}{B+G}\right)
 =
 \frac{n}{n+1}-\frac{n}{n+2}+\frac{n}{n+3}-\cdots.
\end{equation}

This follows from results of Zare \cite{Zare} and Griffiths \cite{Griffiths}. For \(n=1\), this yields \(1-\log 2\). By contrast, the expected boy-to-girl ratio is simpler: since \(G=n\) and \(\E(B)=n\), \(\E\left(B/G\right)=1\).

A closely related problem asks what happens when each family stops when girls first outnumber boys. Under that stopping rule, the expected proportion of boys is
\begin{equation}
\label{eq:girl-majority-proportion}
 \E\left(\frac{B}{B+G}\right)
 =
 \frac{n}{n+2}-\frac{n}{n+4}+\frac{n}{n+6}-\cdots,
\end{equation}
which follows from taking the complement of the stopped win proportion in \cite{BrussPaindaveine}, while the expected boy-to-girl ratio is
\begin{equation}
\label{eq:girl-majority-ratio}
\E\left(\frac{B}{G}\right)
 =
 \frac{n}{2(n+1)}
 +\frac{n}{2^2(n+2)}
 +\frac{n}{2^3(n+3)}+\cdots.
\end{equation}

For \(n=1\), these yield \(1-\pi/4\) \cite{GerholdHubalek,Propp} and \(2\log 2-1\) \cite{GerholdHubalek}, respectively.

The remarkable resemblance among these series hints at a shared structure. We extend these known special cases to a more general stopping rule, and obtain exact series for both expectations. Our proofs use two elementary methods. The first uses bijections on lattice paths and ordered trees to derive recurrences whose iteration yields the series. The second uses random priorities to give parallel probabilistic derivations of the same series.

\subsection{From the puzzle to the general rule.}

Fix a nonnegative integer weight \(w\), and assign family \(i\) a positive integer threshold \(k_i\). Under the \emph{weighted stopping rule}, family \(i\) stops the first time its number of girls equals \(w\) times its number of boys plus \(k_i\). Let \(B_i\) and \(G_i\) denote the numbers of boys and girls in family \(i\) when it stops. Then \(G_i = wB_i+k_i\). When \(w=0\) and \(k_i=1\), a family stops at its first girl. When \(w=1\) and \(k_i=1\), it stops when girls first outnumber boys.

Let \(B\) and \(G\) be the combined totals after all families have stopped, and put \(k=\sum_{i=1}^n k_i\). Then \(G=wB+k\). Assume that all births are independent, with \(b=\Prb(\text{boy})\) and \(g=\Prb(\text{girl})=1-b\), and write \(\rho=b/g\) for the boy-to-girl odds. We work throughout in the regime \(g\geq wb\), in which termination occurs almost surely, with \(g>0\) when \(w=0\).\footnote{Under the tree encoding in Section~\ref{subsec:cut-and-glue}, a family with threshold \(k_i\) corresponds to a forest of \(k_i\) Galton--Watson trees, each having offspring number \(0\) with probability \(g\) and \(w+1\) with probability \(b\). Since \(k_i\) is finite, the standard extinction criterion gives almost-sure termination exactly when \((w+1)b\leq1\), equivalently \(g\geq wb\), apart from the degenerate case \(w=0\), \(b=1\); see \cite[Chapter~I]{AthreyaNey}.}

For each family, start a score at \(0\), give a girl score \(+1\), a boy score \(-w\), and stop when the running score first reaches \(k_i\). Family \(i\)'s score path can be decomposed into \(k_i\) pieces by cutting it when it first visits \(1,\ldots,k_i\), as illustrated in the left panel of Figure~\ref{fig:marked-flip}. Applying this decomposition to every family and subtracting the starting score from each piece produces \(k\) independent paths, each distributed as a family history with threshold \(1\). Their combined totals therefore have the same joint distribution as \((B,G)\). We use this representation in the proofs below, which suffices to prove the theorem for arbitrary positive thresholds \(k_1,\ldots,k_n\).

\begin{maintheorem}
Under the model above, the joint distribution of \(B\) and \(G\) depends on \(k_1,\ldots,k_n\) only through \(k\). Define
\[
P_{k,w}(b)=\E\left(\frac{B}{B+G}\right),
\qquad
R_{k,w}(b)=\E\left(\frac{B}{G}\right).
\]

\begin{enumerate}
\item If \(b\leq1/2\), then
\begin{equation}
\label{eq:proportion-series}
 P_{k,w}(b)
 =
 k\sum_{j\geq1}
 \frac{(-1)^{j-1}\rho^j}{k+j(w+1)}.
\end{equation}

\item If \(w\geq1\), then
\begin{equation}
\label{eq:ratio-series}
 R_{k,w}(b)
 =
 k\sum_{j\geq1}\frac{b^j}{k+jw}.
\end{equation}
\end{enumerate}
\end{maintheorem}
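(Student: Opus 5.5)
The plan is to work with the tree encoding from the footnote. The main theorem's first claim, that \((B,G)\) depends on the thresholds only through \(k\), is exactly the cut-and-glue decomposition described before the theorem: \((B,G)\) is the combined total of \(k\) independent threshold-\(1\) histories. So it suffices to compute both expectations for a forest of \(k\) i.i.d.\ Galton--Watson trees. In each tree a node has \(0\) children (a girl, a leaf) with probability \(g\), or \(w+1\) children (a boy, an internal node) with probability \(b\). Write \(N=B+G\) for the total number of nodes. The constraint \(G=wB+k\) gives the two linear relations
\[
B=\frac{N-k}{w+1}, \qquad B=\frac{G-k}{w}\ (w\ge1).
\]
These let me turn \(B/N\) and \(B/G\) into integrals of probability generating functions. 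The idea is to write \(1/N=\int_0^1 z^{N-1}\,dz\), and similarly for \(1/G\), and then evaluate the resulting integrals by the substitution given by the functional equation of the tree generating function.

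For the proportion, let \(f(z)=\E(z^{N_1})\) for a single tree. It satisfies \(f=z\varphi(f)\) with \(\varphi(u)=g+bu^{w+1}\), and the forest has \(\E(z^N)=f(z)^k\). Then
\[
\E\Bigl(\frac{B}{N}\Bigr)=\int_0^1 \E\bigl(Bz^{N-1}\bigr)\,dz=\frac1{w+1}\int_0^1\Bigl(\frac{d}{dz}f^k-\frac{k f^k}{z}\Bigr)dz=\frac{1}{w+1}\Bigl(f(1)^k-k\int_0^1 f(z)^k\,\frac{dz}{z}\Bigr).
\]
Here \(f(1)=1\) by almost-sure termination. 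In the last integral I substitute \(u=f(z)\), i.e.\ \(z=u/\varphi(u)\). This needs \(u\mapsto u/\varphi(u)\) to be an increasing bijection \([0,1]\to[0,1]\). Its derivative has the sign of \(\varphi-u\varphi'=g-wbu^{w+1}\), which is nonnegative exactly because \(g\ge wb\). With \(dz/z=du/u-\varphi'(u)\,du/\varphi(u)\), the integral becomes
\[
\frac1k-\int_0^1 \frac{(w+1)\rho\,u^{k+w}}{1+\rho u^{w+1}}\,du
=\frac1k-(w+1)\sum_{j\ge1}\frac{(-1)^{j-1}\rho^j}{k+j(w+1)} .
\]
Substituting back, the \(1\)'s cancel and \eqref{eq:proportion-series} drops out. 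The geometric expansion and the termwise integration are justified for \(\rho<1\) by uniform convergence. At \(\rho=1\) (\(b=1/2\)) I would pass to the limit by Abel's theorem together with continuity of \(P_{k,w}\) in \(b\), via dominated convergence.

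For the ratio, with \(w\ge1\), I would instead mark leaves. Let \(h(s)=\E(s^{G_1})\), so that \(h=gs+bh^{w+1}\). The same computation using \(B=(G-k)/w\) gives
\[
R_{k,w}(b)=\frac1w\Bigl(h(1)^k-k\int_0^1 h(s)^k\,\frac{ds}{s}\Bigr).
\]
The substitution is now \(u=h(s)\), \(s=(u-bu^{w+1})/g\). Its derivative \(1-(w+1)bu^w\) is nonnegative on \([0,1]\) again precisely when \(g\ge wb\). We get \(ds/s=du/u-wbu^{w-1}\,du/(1-bu^w)\), and expanding \(1/(1-bu^w)\) geometrically yields
\[
\int_0^1 h^k\,\frac{ds}{s}=\frac1k-w\sum_{j\ge1}\frac{b^j}{k+jw}.
\]
This gives \eqref{eq:ratio-series}. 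Here \(b<1\) always, so no boundary issue arises, except that the \(j\)-series is trivially convergent.

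The step I expect to need the most care is the change of variables. I have to verify that on \([0,1]\) the inverse map selects the correct branch, namely the minimal solution of the fixed-point equation that equals the generating function, rather than another root. I also have to confirm that \(f(1)=h(1)=1\), including the critical case \(g=wb\) where the derivative of the substitution vanishes at \(u=1\). I would handle this by noting that the generating function is continuous and increasing from \(0\), hence it coincides with the monotone inverse branch starting at \(u=0\), and that almost-sure extinction (the footnote's criterion) forces the value \(1\) at the endpoint.
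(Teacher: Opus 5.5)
Your proof is correct. The integral representations, the Lagrange substitutions and the final series all check out, and you arrive at \(P_{k,w}(b)=k\int_0^1 \rho u^{k+w}/(1+\rho u^{w+1})\,du\) and \(R_{k,w}(b)=k\int_0^1 b u^{k+w-1}/(1-bu^{w})\,du\).

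This is not the paper's first proof, which uses no generating functions. There, marking a child and flipping it, or cutting and gluing at a marked boy, yields the recurrences \(P_{k,w}=\rho\frac{k}{k+w+1}(1-P_{k+w+1,w})\) and \(R_{k,w}=b\frac{k}{k+w}(1+R_{k+w,w})\). The series then follow by iteration with the crude tail bounds \(0\le P\le 1\) and \(B/G<1/w\).

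Your argument is instead an analytic version of the paper's random-priority proof:
\begin{itemize}
\item \(\E(Bz^{N-1})\,dz\) is the probability that the largest priority lies in \(dz\) and belongs to a boy.
\item Dividing by \(\frac{d}{dz}f^k\) and using \(f'=\varphi(f)/(1-z\varphi'(f))\) gives exactly the paper's \(p(z)=bf^{w+1}/(g+bf^{w+1})\).
\item Your substitution \(u=f(z)\) is the paper's observation that \(F(Y_i)\) is uniform.
\end{itemize}
Your route replaces the paper's somewhat informal decomposition around the winning child with the linear identities \(B=(N-k)/(w+1)=(G-k)/w\) and the functional equation. It also shows exactly where \(g\ge wb\) is used. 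The bijective route instead produces the recurrences in \(k\) themselves, with essentially no analysis.

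Two of the points you flagged are easier than you expect.
\begin{itemize}
\item Branch choice: \(f\) maps \([0,1]\) into \([0,1]\) and satisfies \(\psi(f(z))=z\) with \(\psi(u)=u/\varphi(u)\), and \(\psi\) is injective on \([0,1]\). So you get \(f=\psi^{-1}\) with no branch question. You even get \(f(1)=1\) for free, because \(q=f(1)\) satisfies \(q=\varphi(q)\), that is, \(\psi(q)=1=\psi(1)\).
\item The case \(\rho=1\): you do not need Abel's theorem plus continuity of \(P_{k,w}\) in \(b\), which would itself need justification (at \(w=1\), \(b=1/2\) this is the critical case). For \(0\le u<1\), the partial sums of \(\sum_{j\ge1}(-1)^{j-1}u^{k-1+j(w+1)}\) lie in \([0,u^{k+w}]\). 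Dominated convergence therefore justifies termwise integration directly, as in the paper's footnote.
\end{itemize}
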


For \(w=0\), \(G=k\) and \(\mathbb E(B)=k\rho\), so \(R_{k,0}(b)=\rho\). When all \(k_i=1\), so that \(k=n\), setting \((w,b)=(0,\tfrac12)\) and \((w,b)=(1,\tfrac12)\) in \eqref{eq:proportion-series} recovers \eqref{eq:first-girl-proportion} and \eqref{eq:girl-majority-proportion}. Setting \((w,b)=(1,\tfrac12)\) in \eqref{eq:ratio-series} recovers \eqref{eq:girl-majority-ratio}.

Both series can also be recovered from general generating-function and negative-moment results; we discuss these connections in Section~\ref{sec:historical-context}.

\subsection{Alternative interpretations.}

This general stopping rule has an interpretation in terms of a simple branching model for a disease outbreak. Suppose outbreak \(i\) begins with \(k_i\) infected individuals. Each infected individual either causes no secondary infections, with probability \(g\), or infects \(w+1\) new individuals, with probability \(b\). The outbreak ends when there are no active infections remaining. Within this framing, the quantities studied in this paper are the expected proportion of infected individuals who transmit the disease and the expected ratio of transmitting to nontransmitting individuals.

The same model describes analogous processes in other settings, such as recursive algorithms in which each call either terminates with probability \(g\) or makes \(w+1\) recursive calls with probability \(b\), and queueing systems with batch arrivals.

\section{Marked bijections.}

\subsection{Flipping a marked child.}

\begin{proposition}
\label{prop:flip}
\begin{equation*}
 P_{k,w}(b)
 =
 \rho\,
 \frac{k}{k+w+1}
 \bigl(1-P_{k+w+1,w}(b)\bigr).
\end{equation*}
\end{proposition}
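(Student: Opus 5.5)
The plan is a weighted bijection on marked children, followed by an exchangeability argument over the decomposition into unit pieces. By the representation above, I take a single score path with threshold \(k\): steps \(+1\) for a girl and \(-w\) for a boy, stopped at the first visit to \(k\). Such a path has \(B\) boys, \(G=wB+k\) girls and \(N=B+G=(w+1)B+k\) children, and it occurs with probability \(b^{B}g^{G}\). I write
\[
P_{k,w}(b)=\E\Bigl(\tfrac{1}{N}\#\{\text{boys}\}\Bigr)=\sum_{(\pi,m)} \Prb(\pi)\,\frac{1}{N(\pi)},
\]
where the sum runs over pairs \((\pi,m)\) consisting of a threshold-\(k\) path \(\pi\) and a marked boy \(m\) in \(\pi\). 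In the same way, \(1-P_{k+w+1,w}(b)=\E(G/N)\) for threshold \(k+w+1\) is a sum over paths with a marked girl.

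\emph{Step 1 (the flip).} I change the marked boy into a girl. Every later position of the path rises by exactly \(w+1\).
\begin{itemize}
\item Before \(m\) the path stays below \(k\). Afterwards the original path stays below \(k\) until its final step, so the new path stays below \(k+w+1\) until it ends exactly at \(k+w+1\). The image is therefore a valid first-passage path with threshold \(k+w+1\).
\item The image has \(B-1\) boys and \(G+1\) girls, and \(G+1=w(B-1)+(k+w+1)\), as required.
\item The total \(N\) is unchanged, and the probability is multiplied by \(g/b=1/\rho\).
\end{itemize}
The inverse flips a marked girl back to a boy, lowering every later position by \(w+1\). This is legal exactly when all positions strictly before the marked girl are below \(k\), that is, when the marked girl occurs at or before the first passage time \(T_k\) of the new path to level \(k\). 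I must check that this condition excludes the case where the marked girl is the final step: the preceding position \(k+w\) is not below \(k\), so that case is excluded, as it should be. Hence the flip is a bijection onto (threshold-\((k+w+1)\) path, marked girl at time \(\le T_k\)), and
\[
P_{k,w}(b)=\rho\,\E_{k+w+1}\!\left(\frac{G_{\le T_k}}{N}\right).
\]

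\emph{Step 2 (symmetry).} I cut the threshold-\((k+w+1)\) path at its first visits to \(1,\dots,k+w+1\). This gives \(k+w+1\) i.i.d.\ unit-threshold pieces, and \(G_{\le T_k}\) is the number of girls in the first \(k\) pieces. The total \(N\) is a symmetric function of the pieces, so by exchangeability \(\E(G^{(j)}/N)\) is the same for every piece \(j\). Therefore
\[
\E\bigl(G_{\le T_k}/N\bigr)=\frac{k}{k+w+1}\,\E(G/N)=\frac{k}{k+w+1}\bigl(1-P_{k+w+1,w}(b)\bigr),
\]
which is the claimed identity.

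I expect the main obstacle to be Step 1. The delicate point is identifying the image of the flip precisely, namely girls up to \(T_k\) rather than all girls, and verifying both directions at the boundary cases (marked child first, last, or at time \(T_k\)). I also need to note that the interchange of sums is harmless, since all terms are nonnegative and bounded by \(1\).
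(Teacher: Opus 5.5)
Your proposal is correct and follows the paper's proof essentially step for step. You use the same flip of a marked boy into a girl, with weight ratio \(\rho\), as a bijection onto threshold-\((k+w+1)\) histories whose marked girl lies in the first \(k\) blocks (your ``at or before \(T_k\)''). You then apply the same exchangeability over the \(k+w+1\) unit blocks to obtain the factor \(k/(k+w+1)\).
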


\begin{proof}
After the \(k\) families have stopped, choose one child uniformly from the combined population and mark that child. \(P_{k,w}(b)\) is the probability that the marked child is a boy.

Concatenate the birth histories of the \(k\) families and form a score path using the construction above. This path first reaches level \(j\) at the end of the \(j\)th family history. In particular, it first reaches \(k\) at its final step, and the individual histories are recovered by cutting the path at its first visits to \(1,2,\ldots,k\).

If the marked child is a boy, flip it into a girl. This creates the following bijection, also illustrated in Figure~\ref{fig:marked-flip}:
\[
\left\{
\begin{array}{c}
\text{\(k\) family histories with} \\
\text{a marked boy}
\end{array}
\right\}
\overset{\text{flip}}{\longleftrightarrow}
\left\{
\begin{array}{c}
\text{\(k+w+1\) family histories} \\
\text{with a marked girl in one of} \\
\text{the first \(k\) blocks}
\end{array}
\right\}.
\]

\begin{figure}[htbp]
\centering
\includegraphics{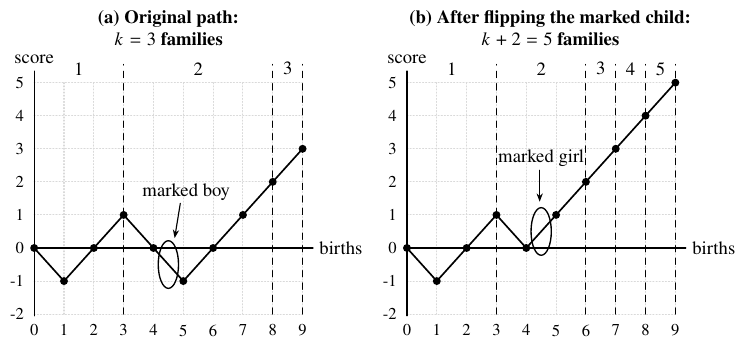}
\caption{Family cuts and the flip bijection for \(w=1\). Flipping one boy to a girl raises the suffix by \(2\) and turns \(k\) family histories into \(k+2\).}
\label{fig:marked-flip}
\end{figure}

Before the marked child, the new path agrees with the original, and after that it is higher by \(w+1\). Since the original path first reaches \(k\) at the end, the new path first reaches \(k+w+1\) at the end and therefore it is a valid concatenation of \(k+w+1\) family histories. Further, the score immediately after the marked child's birth is at most \(k\), and level \(k\) has not been attained previously, so the marked girl lies in one of the first \(k\) families.

Conversely, flipping such a marked girl back to a boy lowers the path after that step by \(w+1\), and the resulting path first reaches \(k\) at its final step. Thus the flip is a bijection.

The probability of the original marked outcome is \(b^B g^G/(B+G)\). This is \(b/g=\rho\) times the probability of the image, which contains \(B-1\) boys and \(G+1\) girls. Further, by exchangeability of the \(k+w+1\) family blocks, the marked girl is equally likely to lie in any of them, so the probability that it lies in one of the first \(k\) blocks is
\[
 \frac{k}{k+w+1}
 \Prb(\text{the marked child is a girl})
 =
 \frac{k}{k+w+1}
 \bigl(1-P_{k+w+1,w}(b)\bigr).
\]
Multiplying by \(\rho\) proves Proposition~\ref{prop:flip}.

After \(m\) iterations of the recursion,
\begin{equation*}
 P_{k,w}(b)
 =
 k\sum_{j=1}^{m}
 (-1)^{j-1}\frac{\rho^j}{k+j(w+1)}
 +
 (-1)^m\frac{k\rho^m}{k+m(w+1)}
 P_{k+m(w+1),w}(b).
\end{equation*}
Since \(0\leq P_{k+m(w+1),w}(b)\leq1\) and \(b\leq1/2\), \(\rho\leq1\) and the remainder term above tends to zero as \(m\to\infty\). This yields \eqref{eq:proportion-series}.
\end{proof}

\subsection{Cutting at a marked boy.}
\label{subsec:cut-and-glue}

\begin{proposition}
\label{prop:cut-glue}
\begin{equation*}
 R_{k,w}(b)
 =
 b\,\frac{k}{k+w}\bigl(1+R_{k+w,w}(b)\bigr).
\end{equation*}
\end{proposition}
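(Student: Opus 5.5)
We write \(B/G\) as a sum over marked boys, each weighted by \(1/G\), and cut out the marked boy; this is the analogue of Proposition~\ref{prop:flip} with ``flip'' replaced by ``delete''. Concretely,
\[
R_{k,w}(b)=\sum_{\text{outcomes}} b^Bg^G\,\frac{B}{G}=\sum_{\text{(outcome, marked boy)}} b^Bg^G\,\frac1G,
\]
where outcomes are concatenated score paths of \(k\) family histories, i.e.\ paths with steps \(+1\) (girl) and \(-w\) (boy) that first reach level \(k\) at their final step.

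\textbf{The cut.} Given such a path with a marked boy, delete the marked boy's step.
\begin{itemize}
\item Before the deleted step the path is unchanged, and it stays strictly below \(k\) there.
\item After the deleted step the path is raised by \(w\). Its old values were below \(k\) except at the end, so the new path stays strictly below \(k+w\) until it ends exactly at \(k+w\).
\item Hence the result is a valid concatenation of \(k+w\) family histories, with \(B'=B-1\) boys and \(G'=G\) girls; indeed \(G'=wB'+k+w\).
\item We record the position of the deletion as a marked gap, namely the step that immediately followed the deleted boy.
\end{itemize}

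\textbf{The inverse.} Insert a boy immediately before a marked step of a \((k+w)\)-history path. This lowers the suffix by \(w\). The resulting path is a valid \(k\)-history path exactly when the level before the insertion point is below \(k\), i.e.\ when the marked step lies within the first \(k\) blocks. If it does, the new path first reaches \(k\) at its end; if it does not, level \(k\) was already reached before the insertion. Since the marked boy was never the final step, the following step always exists.

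We therefore expect a bijection
\[
\left\{
\begin{array}{c}
\text{\(k\) family histories} \\
\text{with a marked boy}
\end{array}
\right\}
\longleftrightarrow
\left\{
\begin{array}{c}
\text{\(k+w\) family histories with a marked} \\
\text{child in one of the first \(k\) blocks}
\end{array}
\right\}.
\]
Checking this carefully, especially the endpoint conventions and the claim that the marked child is never past the first visit to \(k\), is the main obstacle.

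\textbf{Weights.} The weight \(b^Bg^G/G\) equals \(b\) times \(b^{B'}g^{G'}/G'\), because \(G'=G\). Summing over marked children in the first \(k\) blocks gives
\[
R_{k,w}(b)=b\,\E\left(\frac{N_k}{G'}\right),
\]
where \(N_k\) is the number of children in the first \(k\) of the \(k+w\) blocks. The blocks are i.i.d. histories, and \(G'\) is a symmetric function of them. Exchangeability therefore gives \(\E(N_k/G')=\frac{k}{k+w}\E\bigl((B'+G')/G'\bigr)\). This yields \(b\frac{k}{k+w}\bigl(1+R_{k+w,w}(b)\bigr)\), as claimed.

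Iterating this identity, the remainder is
\[
b^m\frac{k}{k+mw}\,R_{k+mw,w}(b)\le b^m\frac{k}{k+mw}\cdot\frac1w,
\]
using \(B/G\le 1/w\) for \(w\ge1\). This tends to zero and gives \eqref{eq:ratio-series}.
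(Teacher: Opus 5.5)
Your proof is correct. Its skeleton is the paper's: the same two marked sets, the same weight ratio $b$ (because $G$ is unchanged), and the same exchangeability step over $k+w$ i.i.d.\ blocks.

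What differs is the bijection itself. The paper passes to the ordered-tree encoding and deletes the marked boy $v$. It lets the first child subtree $T_0$ take $v$'s place with its root marked, and appends $T_1,\ldots,T_w$ as the last $w$ trees. You stay on the score path, delete the boy's step, and let the suffix rise by $w$. Since birth order is preorder, write the original path as $P\,v\,T_0T_1\cdots T_w\,Q$. The paper's image is then $P\,T_0\,Q\,T_1\cdots T_w$, while yours is $P\,T_0T_1\cdots T_w\,Q$. Both mark the same child, the one born right after $v$ (the root of $T_0$), but the resulting forests differ.

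Each route buys something. Yours needs no tree encoding and is the exact path counterpart of the paper's flip proof of Proposition~\ref{prop:flip}: a flip raises the suffix by $w+1$, a deletion by $w$. So it exhibits both recurrences as the same one-step surgery on the path. The paper's route makes the block structure of the image visible at once, since the appended trees \emph{are} the last $w$ blocks. You instead must re-parse the new path by first passages, which is exactly what your level bounds do.

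One wording fix concerns the inverse. Validity is not governed by the single level just before the insertion point, but by whether the path has already reached $k$ there. For $w\geq1$ the path can reach $k$ and dip back below it. For example, take $k=w=1$ and the path $GBGG$ with its third step marked: the level before that step is $0<1$, yet inserting a boy yields $GBBGG$, which reaches $1$ at its first step. Your ``i.e.'' clause states the right condition. Your first bullet (the prefix stays strictly below $k$) already shows that the marked step precedes the first visit to $k$. So the bijection is fully verified, and the hedge about the ``main obstacle'' can be dropped.
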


\begin{proof}
Encode each family history as an ordered tree. If the first child is a girl, the tree consists of a single leaf. If the first child is a boy, the running score falls from \(0\) to \(-w\). Divide the remainder of the history at its first visits to
\[
 -w+1,-w+2,\ldots,0,1.
\]
This produces \(w+1\) consecutive subhistories. Make the initial boy an internal vertex and attach the trees corresponding to these \(w+1\) subhistories as its ordered child subtrees. Thus girls correspond to leaves, boys correspond to internal vertices with \(w+1\) ordered children, and \(k\) family histories correspond to a forest of \(k\) full ordered \((w+1)\)-ary trees, as illustrated in Figure~\ref{fig:history-tree}.

\begin{figure}[htbp]
\centering
\includegraphics{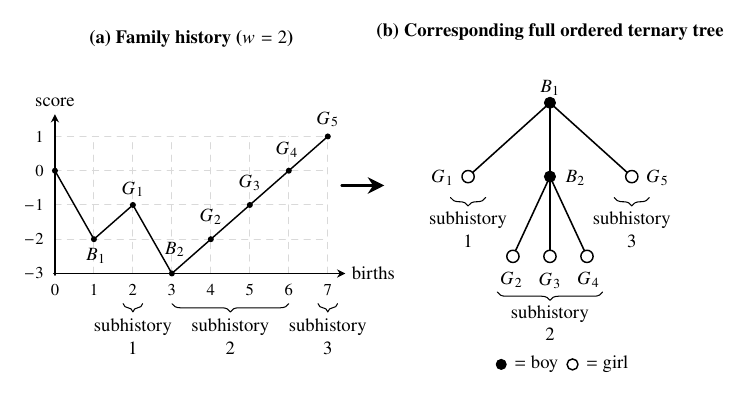}
\caption{The recursive correspondence between a \(w=2\) family history and a full ordered ternary tree. Boys become internal vertices and girls become leaves. The three subhistories following the initial boy become the three ordered child subtrees. Here, the subscripts on \(B_j\) and \(G_j\) index individual children, not families.}
\label{fig:history-tree}
\end{figure}

Choose a boy vertex \(v\) and mark it. Let \(T_0,T_1,\ldots,T_w\) be its ordered child subtrees. The bijection illustrated in Figure~\ref{fig:cut-and-glue} has three steps: delete \(v\) and replace the subtree rooted at \(v\) by \(T_0\); append \(T_1,\ldots,T_w\), in that order, to the end of the forest; and mark the root of \(T_0\). The result is a forest of \(k+w\) trees with a marked vertex, not necessarily a boy, in one of the first \(k\) trees.

To reverse the construction, remove the final \(w\) trees. In the component containing the marked vertex \(x\), replace the subtree rooted at \(x\) by a new boy vertex whose ordered child subtrees are the original subtree at \(x\), followed, in order, by the \(w\) removed trees.
\[
\left\{
\begin{array}{c}
\text{\(k\) trees with} \\
\text{a marked boy}
\end{array}
\right\}
\overset{\text{cut/glue}}{\longleftrightarrow}
\left\{
\begin{array}{c}
\text{\(k+w\) trees} \\
\text{with a marked vertex in one of} \\
\text{the first \(k\) trees}
\end{array}
\right\}.
\]

\begin{figure}[htbp]
\centering
\includegraphics{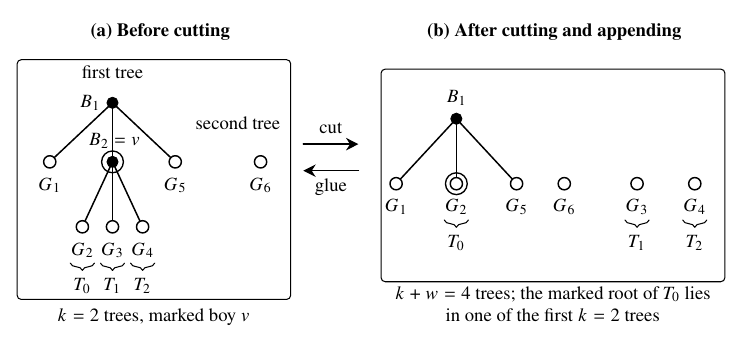}
\caption{The cut-and-glue operation for \(w=2\). Deleting the marked boy \(B_2=v\) leaves \(T_0\) in place, with its root marked, and appends \(T_1\) and \(T_2\) as the last two components. Here, the subscripts on \(B_j\) and \(G_j\) index individual children, not families.}
\label{fig:cut-and-glue}
\end{figure}

We can interpret \(R_{k,w}(b)\) as the total weight of all pairs consisting of a forest and a marked boy vertex, assigning weight \(b^B g^G/G\) to a pair with \(B\) boys and \(G\) girls. Under the bijection, the original weight is \(b\) times the image weight, as the image has \(B-1\) boys and the same \(G\) girls.

On the image side, let \(V_i\) be the number of vertices in the \(i\)th tree. Since the marked vertex must lie in one of the first \(k\) trees, and the \(k+w\) trees are exchangeable, the total weight of the image objects is
\begin{equation*}
  \frac{k}{k+w}
 \E\left(\frac{\sum_{j=1}^{k+w} V_j}{G}\right)
 =
 \frac{k}{k+w}
 \E\left(\frac{B+G}{G}\right)
 =
 \frac{k}{k+w}
 \bigl(1+R_{k+w,w}(b)\bigr).
\end{equation*}
Multiplying by the weight ratio \(b\) proves Proposition~\ref{prop:cut-glue}.

After \(m\) iterations of the recursion,
\[
 R_{k,w}(b)
 =
 k\sum_{j=1}^{m}\frac{b^j}{k+jw}
 +
 \frac{k b^m}{k+mw}R_{k+mw,w}(b).
\]

For \eqref{eq:ratio-series}, the theorem assumes \(w\geq1\). If \(b=0\), the result is immediate. Otherwise \(0<b<1\). Since every such forest satisfies \(G-wB>0\), we have \(B/G<1/w\), and hence \(R_{k+mw,w}(b)<1/w\). Therefore the remainder term is at most \(k b^m / (w(k+mw))\), which tends to zero as \(m\to\infty\). This yields \eqref{eq:ratio-series}.

\end{proof}

\subsection{A forest bijection for Proposition~\ref{prop:flip}.}

Given a forest of \(k\) trees with a marked boy vertex \(v\), replace \(v\) by a marked leaf and append its \(w+1\) ordered child subtrees to the forest. The result has \(k+w+1\) trees, with the marked leaf in one of the first \(k\). The inverse changes the marked leaf back into a boy vertex and attaches the final \(w+1\) trees to it in order. The bijection preserves the number of vertices and changes the probability weight by \(b/g=\rho\), so exchangeability recovers Proposition~\ref{prop:flip}. Compare this with Proposition~\ref{prop:cut-glue}: there \(T_0\) takes the boy's place, with its root marked, and only \(T_1,\ldots,T_w\) are appended, producing \(k+w\) rather than \(k+w+1\) trees.

\section{Random priorities.}

We now give probabilistic proofs of the two series in the theorem.

\subsection{Priorities for all children.}

Give every child an independent uniform priority in \((0,1)\). Conditional on the completed histories, each child is equally likely to have the largest priority. Hence
\[
 P_{k,w}(b)
 =
 \Prb(\text{the overall winner is a boy}).
\]

Consider a single family, and let \(N_i=B_i+G_i=(w+1)B_i+1\). Let \(Y_i\) be the largest priority in this family. Each priority is at most \(y\) with probability \(y\), so define
\[
F(y)=\Prb(Y_i\leq y)=\E(y^{N_i}).
\]

Conditioning on \(Y_i\), write \(p(Y_i)\) for the probability that the family winner is a boy. We compute \(p\) using the tree encoding from Section~\ref{subsec:cut-and-glue}. If the winning priority is \(y\), then once the winning child is fixed, the contribution from the rest of the tree, excluding the winner and its descendants, is the same whether the winner is a girl or a boy. A girl winner contributes \(g\), whereas a boy winner contributes \(b\), and each of its \(w+1\) child subtrees must have maximum priority below \(y\), contributing \(F(y)^{w+1}\). Hence
\begin{equation}
\label{p-rec}
p(y) = \frac{bF(y)^{w+1}}{g+bF(y)^{w+1}}.
\end{equation}

\(F(y)\) is continuous and strictly increasing, so for \(0 < x < 1\), \(\Prb(F(Y_i)\leq x) = \Prb\bigl(Y_i\leq F^{-1}(x)\bigr) = x\). Thus \(F(Y_i)\) is uniform on \((0,1)\). The overall winner belongs to the family with the largest winning priority \(M=\max(Y_1,\ldots,Y_k)\). Therefore \(P_{k,w}(b) = \E\bigl(p(M)\bigr)\). Expanding \eqref{p-rec} at \(y=M\) and letting \(U=F(M)\) gives

\begin{equation}
\label{eq:P-U-series}
 P_{k,w}(b)
 =
 \E\left(
 \frac{bU^{w+1}}
      {g+bU^{w+1}}
 \right)
 =
\E\left(\sum_{j\geq1}(-1)^{j-1}
 \rho^{j} U^{j(w+1)}\right).
\end{equation}

This series converges, and we may take expectations term by term.\footnote{Since \(b\leq1/2\), \(\rho\leq1\). If \(\rho<1\), the sum of the absolute values is at most the convergent geometric series \(\sum_{j\geq1}\rho^j\). If \(\rho=1\), the partial sums are bounded by \(1\) and converge almost surely because \(U<1\).}

\(U\) is the maximum of \(k\) independent uniform \((0,1)\) random variables. For \(0\leq u\leq1\), \(\Prb(U\leq u)=u^k\), so \(U\) has density \(ku^{k-1}\). Hence, for \(r\geq0\),
\begin{equation}
\label{eq:uniform-moment}
 \E(U^r)
 =
 \int_0^1 u^rku^{k-1}\,du
 =
 \frac{k}{k+r}.
\end{equation}
Using \eqref{eq:uniform-moment} in \eqref{eq:P-U-series} yields \eqref{eq:proportion-series}.

\subsection{Priorities for girls.}

Assume \(w\geq1\) and assign independent uniform priorities only to the girls. In family \(i\), distinguish the \(B_i\) leftmost leaves. Across all \(k\) families, there are therefore \(B\) distinguished girls among \(G\) girls. Conditional on the completed histories, each girl is equally likely to have the largest priority. Hence
\[
 R_{k,w}(b)
 =
 \Prb(\text{the overall winner is distinguished}).
\]

For one family, let \(Y_i\) be its largest girl priority and define
\[
H(y)=\Prb(Y_i\leq y)=\E(y^{G_i}).
\]

Conditioning on \(Y_i\), write \(q(Y_i)\) for the probability that the family winner is distinguished. For fixed \(B_i\) and \(G_i\), the probability that the winner is distinguished and has priority at most \(y\) is \(B_i/G_i y^{G_i}\), whereas \(\Prb(Y_i\leq y\mid B_i,G_i)=y^{G_i}\). Differentiating with respect to \(y\) and averaging over the family history gives

\begin{equation}
\label{q-densities}
q(y)
=
\frac{\E(B_i y^{G_i-1})}{\E(G_i y^{G_i-1})}
=
\frac{\E(B_i y^{G_i})}{\E(G_i y^{G_i})}.
\end{equation}

Using the tree encoding from Section~\ref{subsec:cut-and-glue}, interpret \(A(y)=\E(B_i y^{G_i})\) as the total weight of family trees with one boy marked and \(Y_i \leq y\). If the root is marked, all \(w+1\) child subtrees contribute \(H(y)\). Otherwise, one of them contributes \(A(y)\) and the remaining \(w\) contribute \(H(y)\). Hence
\[
A(y)
=
bH(y)^w\bigl(H(y)+(w+1)A(y)\bigr).
\]

Since \(G_i=wB_i+1\), the expression in parentheses is \(\E(G_i y^{G_i})+A(y)\). Substituting this above, rearranging, and using \eqref{q-densities}, we obtain
\[
q(y)
=
\frac{bH(y)^w}{1-bH(y)^w}.
\]

As in the preceding subsection, the \(H(Y_i)\) are independent uniform random variables. The overall winner belongs to the family with the largest winning priority \(M=\max(Y_1,\ldots,Y_k)\), so \(R_{k,w}(b)=\E(q(M))\). Expanding the expression for \(q(M)\) and letting \(U=H(M)\) gives
\begin{equation}
\label{eq:R-U-series}
R_{k,w}(b)
=
\E\left(
\frac{bU^w}{1-bU^w}
\right)
=
\E\left(
\sum_{j\geq1}b^jU^{jw}
\right).
\end{equation}

This series has nonnegative terms, so we may take expectations term by term. Since \(U\) is again the maximum of \(k\) independent uniform \((0,1)\) random variables, applying \eqref{eq:uniform-moment} to \eqref{eq:R-U-series} yields \eqref{eq:ratio-series}.

\section{Historical context.}
\label{sec:historical-context}

Sex-based stopping rules have long been studied through expected family sizes and population sex ratios. Robbins related them to fair gambling systems \cite{Robbins}, while later work considered first-girl, first-boy, and capped policies through expected counts and ratios of expectations \cite{Paseau,GrechJamesLauri}. Expected proportions appear in Griffiths and Zare for the first-girl rule \cite{Zare,Griffiths}, and in Bruss and Paindaveine for the girl-majority rule \cite{BrussPaindaveine}. Yamaguchi derived a recurrence for the expected proportion of boys in families that stop after a prescribed number of boys. After exchanging boys and girls, his recurrence is the \(w=0\) case of Proposition~\ref{prop:flip} \cite{Yamaguchi}. Related random-walk treatments include Gerhold and Hubalek \cite{GerholdHubalek} and Propp \cite{Propp}.

Related unweighted decompositions of marked trees appear elsewhere. Chen, Li, and Shapiro observe that a plane tree with a marked vertex splits into a leaf-marked tree and the subtree rooted at that vertex \cite{ChenLiShapiro}. Cox uses the \(w=k=1\) case of the tree flip, splitting a binary tree at a marked internal vertex into three binary trees, with a marked leaf in the tree containing the original root \cite{Cox}. Neither paper assigns probability weights to these constructions or derives the forest recurrences used here.

\subsection{The weighted stopping rule and its distribution.}

The weighted boundary itself is classical. Mohanty's coin-tossing game stops the first time the number of heads equals \(r\) times the number of tails plus \(a\) \cite{Mohanty}. Interpreting heads as girls and tails as boys, and setting \((r,a)=(w,k)\), gives our rule for one family. Mohanty derived the probability generating function for the duration, along with a formula counting the paths that avoid the boundary. In our notation, \(B\) has the generalized negative-binomial distribution of Jain and Consul, whose convolution property explains the dependence on the assigned integers only through their sum \cite{JainConsul}. The stopping time \(B+G=k+(w+1)B\) is also a skip-free first-passage time \cite{Steutel} and the total progeny of a Galton--Watson forest \cite{Dwass}. The path and tree encodings used in our proofs are therefore classical \cite{Stanley}. Our formulas come from augmenting those encodings by marking a child or vertex and applying the flip and cut-and-glue bijections.

More generally, Minami gives a multivariate generating function for the total progeny and outdegree counts of a general Galton--Watson tree \cite{Minami}. After specializing the offspring distribution to \(\{0,w+1\}\), passing to a forest of \(k\) independent trees, differentiating with respect to the degree markers, and integrating the resulting expressions, we can also recover our two expectations from his generating function. Minami does not evaluate either of these expectations or state either resulting series.

The series can also be recovered from the shifted inverse moments studied by Kumar and Consul \cite{KumarConsul}. Let \(s_1=k/(w+1)\), and, when \(w\geq1\), let \(s_2=k/w\). Then
\begin{align*}
P_{k,w}(b)&=\frac{s_1}{k}\left(1-s_1\E\left(\frac1{B+s_1}\right)\right),\\
R_{k,w}(b)&=\frac{s_2}{k}\left(1-s_2\E\left(\frac1{B+s_2}\right)\right).
\end{align*}
Identifying their parameters \((m,\beta,\theta,k)\) with our \((k,w+1,b,s)\), respectively, and applying their equation (3.15) with our \(s\) equal to \(s_1\) or \(s_2\) yields the two series after simplification. Their result treats more general negative moments, but neither series appears to be stated there, and their proof uses a differential recurrence rather than the marked bijections or random priorities used here.

\AMMPreprintDisclosure

\section*{Generative AI Disclosure.}

During the development and revision of this article, the author used OpenAI's GPT-5.6 Sol to explore ideas, identify potentially relevant literature, and obtain feedback on the article's structure and exposition. The author checked the tool's terms of use, independently verified all mathematical arguments and bibliographic information, reviewed and revised all AI-assisted text, and takes full responsibility for the originality, accuracy, and integrity of the article.

\AMMNamedDeclarations

\bibliographystyle{vancouver}
\bibliography{\AMMBibliographyFiles}

\begin{thebibliography}{10}

\bibitem{Landsburg}
Landsburg SE.
\newblock Are You Smarter Than {Google}? [blog post on the Internet]. The Big
  Questions; 2010 Dec 21 [cited 2026 Aug 5].
\newblock Available from:
  \url{https://www.thebigquestions.com/2010/12/21/are-you-smarter-than-google/}.

\bibitem{Zare}
Zare D.
\newblock Google Question: In a Country in Which People Only Want Boys [answer
  on the Internet]. MathOverflow; 2010 Mar 12 [cited 2026 Aug 5].
\newblock Available from:
  \url{https://mathoverflow.net/questions/17960/google-question-in-a-country-in-which-people-only-want-boys}.

\bibitem{Griffiths}
Griffiths M.
\newblock The Carry-on-until-one-girl Proportion.
\newblock Math Gaz. 2015;99:464-7.

\bibitem{BrussPaindaveine}
Bruss FT, Paindaveine D.
\newblock Win Rates at First-Passage Times for Biased Simple Random Walks
  [preprint on the Internet]. arXiv; 2025 [cited 2026 Aug 5].
\newblock Available from: \url{https://arxiv.org/abs/2512.21254}.
\newblock arXiv:2512.21254.

\bibitem{GerholdHubalek}
Gerhold S, Hubalek F.
\newblock Unparalleled Instances of Prolifickness, Random Walks, and Square
  Root Boundaries [preprint on the Internet]. arXiv; 2025 [cited 2026 Aug 5].
\newblock Available from: \url{https://arxiv.org/abs/2504.06817}.
\newblock arXiv:2504.06817.

\bibitem{Propp}
Propp J.
\newblock Estimating {$\pi$} with a Coin [preprint on the Internet]. arXiv;
  2026 [cited 2026 Aug 5].
\newblock Available from: \url{https://arxiv.org/abs/2602.14487}.
\newblock arXiv:2602.14487.

\bibitem{AthreyaNey}
Athreya KB, Ney PE.
\newblock Branching Processes.
\newblock Berlin: Springer; 1972.

\bibitem{Robbins}
Robbins H.
\newblock A Note on Gambling Systems and Birth Statistics.
\newblock Am Math Mon. 1952;59:685-6.

\bibitem{Paseau}
Paseau AC.
\newblock Family Planning.
\newblock Math Gaz. 2011;95:213-7.

\bibitem{GrechJamesLauri}
Grech VE, James WH, Lauri J.
\newblock On Stopping Rules and the Sex Ratio at Birth.
\newblock Early Hum Dev. 2018;127:15-20.

\bibitem{Yamaguchi}
Yamaguchi K.
\newblock A Formal Theory for Male-Preferring Stopping Rules of Childbearing:
  Sex Differences in Birth Order and in the Number of Siblings.
\newblock Demography. 1989;26(3):451-65.

\bibitem{ChenLiShapiro}
Chen WYC, Li NY, Shapiro LW.
\newblock The Butterfly Decomposition of Plane Trees.
\newblock Discrete Appl Math. 2007;155(17):2187-201.

\bibitem{Cox}
Cox S.
\newblock Classifying Tree Topology Changes Along Tropical Line Segments.
\newblock Algebraic Stat. 2023;14(1):71-90.

\bibitem{Mohanty}
Mohanty SG.
\newblock On a Generalised Two-Coin Tossing Problem.
\newblock Biom Z. 1966;8(4):266-72.

\bibitem{JainConsul}
Jain GC, Consul PC.
\newblock A Generalized Negative Binomial Distribution.
\newblock SIAM J Appl Math. 1971;21(4):501-13.

\bibitem{Steutel}
Steutel FW.
\newblock First-Passage Times in a Skip-Free Random Walk.
\newblock Eindhoven: Technische Hogeschool Eindhoven; 1977. Memorandum COSOR
  77-23.

\bibitem{Dwass}
Dwass M.
\newblock The Total Progeny in a Branching Process and a Related Random Walk.
\newblock J Appl Probab. 1969;6(3):682-6.

\bibitem{Stanley}
Stanley RP.
\newblock Catalan Numbers.
\newblock Cambridge: Cambridge University Press; 2015.

\bibitem{Minami}
Minami N.
\newblock On the Number of Vertices with a Given Degree in a {Galton--Watson}
  Tree.
\newblock Adv Appl Probab. 2005;37(1):229-64.

\bibitem{KumarConsul}
Kumar A, Consul PC.
\newblock Negative Moments of a Modified Power Series Distribution and Bias of
  the Maximum Likelihood Estimator.
\newblock Commun Stat Theory Methods. 1979;8(2):151-66.

\end{thebibliography}

\AMMAuthorBiography

\end{document}